\documentclass[oneside,reqno,english]{amsart}
\usepackage[latin9]{inputenc}
\usepackage{amstext}
\usepackage{amsthm}
\usepackage{amssymb}
\usepackage{geometry}
\usepackage{setspace}
\usepackage{esint}

\makeatletter
\theoremstyle{plain}
\newtheorem{lem}{\protect\lemmaname}
\theoremstyle{plain}
\newtheorem{thm}{\protect\theoremname}

\@ifundefined{date}{}{\date{}}
\PassOptionsToPackage{reqno}{amsmath}

\usepackage{mathtools}

\makeatletter
\AtBeginDocument{\tagsleft@false} % ensures numbers appear on the right
\makeatother

\usepackage{chngcntr}
\counterwithout{equation}{section}

\usepackage{microtype}   % improves text spacing and kerning
\usepackage{setspace}    % control line spacing
\makeatother

\usepackage{babel}
\providecommand{\lemmaname}{Lemma}
\providecommand{\theoremname}{Theorem}

\begin{document}
\title{Unbounded Oscillation of Euler-Gompertz Diophantine Errors from Bell
and Gould Numbers}
\author{Michael R. Powers}
\address{Department of Finance, School of Economics and Management, Tsinghua
University, Beijing, China 100084}
\email{powers@sem.tsinghua.edu.cn}
\keywords{Euler-Gompertz constant; Diophantine errors; Bell numbers; Gould numbers.}
\date{June 15, 2026}
\begin{abstract}
We investigate the asymptotic behavior of the Diophantine errors $\delta B_{n}-A_{n}$,
where $\delta=-e\textrm{Ei}\left(-1\right)$ denotes the Euler-Gompertz
constant and $B_{n}$ and $A_{n}$ are the $n$th Bell and Gould numbers,
respectively. These errors have exponential generating function $g_{\delta}\left(z\right)=\exp\left(e^{z}-1\right)\left(\delta-{\displaystyle {\textstyle \int_{0}^{z}}\exp\left(1-e^{t}\right)dt}\right),\:z\in\mathbb{C}$,
and it is known that $\delta B_{n}-A_{n}=O(B_{n}\exp(-cn/\left(\ln\left(n\right)\right)^{2}))$
for some $c\in\mathbb{R}_{>0}$, implying $\lim_{n\rightarrow\infty}\left(A_{n}/B_{n}\right)=\delta$.
In the present work, we prove that $\delta B_{n}-A_{n}$ oscillates
without bound as $n\rightarrow\infty$; that is, both $\limsup_{n\rightarrow\infty}\left(\delta B_{n}-A_{n}\right)=\infty$
and $\liminf_{n\rightarrow\infty}\left(\delta B_{n}-A_{n}\right)=-\infty$.
\end{abstract}

\maketitle

\section{Introduction}

\noindent Let 
\begin{equation}
\left\{ B_{n}\right\} _{n\geq0}=\left\{ 1,1,2,5,15,52,\ldots\right\} 
\end{equation}
 and 
\begin{equation}
\left\{ A_{n}\right\} _{n\geq0}=\left\{ 0,1,1,3,9,31,\ldots\right\} 
\end{equation}
denote the integer-valued sequences of Bell and Gould numbers, respectively.\footnote{The Bell numbers are recorded as OEIS sequence A000110, and the Gould
numbers as OEIS sequence A040027, where the latter sequence is offset
by one term (i.e., A040027 drops the initial $0$).} These sequences have corresponding exponential generating functions
\[
g_{B}\left(z\right)=e^{e^{z}-1}={\displaystyle \sum_{n=0}^{\infty}}\dfrac{B_{n}}{n!}z^{n},\:z\in\mathbb{C},
\]
and
\[
g_{A}\left(z\right)=e^{e^{z}-1}{\displaystyle \int_{0}^{z}e^{1-e^{t}}dt}={\displaystyle \sum_{n=0}^{\infty}}\dfrac{A_{n}}{n!}z^{n},\:z\in\mathbb{C},
\]
where: (a) the function $\exp\left(1-e^{t}\right)$ is entire on $\mathbb{C}$;
(b) the integral ${\textstyle \int_{0}^{z}}\exp\left(1-e^{t}\right)dt$
is path independent; and (c) ${\textstyle \int_{0}^{\infty}}\exp\left(1-e^{t}\right)dt=-e\textrm{Ei}\left(-1\right)=\delta$,
the Euler-Gompertz constant (see Gould and Quaintance {[}1{]}). The
Bell and Gould numbers can be combined to yield the sequence of Diophantine
errors
\begin{equation}
\left\{ \delta B_{n}-A_{n}\right\} _{n\geq0},
\end{equation}
which is of research interest because the irrationality of $\delta$
remains an open question. This sequence has exponential generating
function
\[
g_{\delta}\left(z\right)=\delta g_{B}\left(z\right)-g_{A}\left(z\right)
\]
\begin{equation}
=e^{e^{z}-1}\left(\delta-\int_{0}^{z}e^{1-e^{t}}dt\right)={\displaystyle \sum_{n=0}^{\infty}}\dfrac{\left(\delta B_{n}-A_{n}\right)}{n!}z^{n},\:z\in\mathbb{C},
\end{equation}
which simplifies to $g_{\delta}\left(x\right)=-\exp\left(e^{x}\right)\textrm{Ei}\left(-e^{x}\right)$
for $x\in\mathbb{R}$.

In their study of (1), (2), and (3), Gould and Quaintance {[}1{]}
conjectured that
\begin{equation}
\underset{n\rightarrow\infty}{\lim}\dfrac{A_{n}}{B_{n}}=\delta,
\end{equation}
a result subsequently confirmed by Asakly et al. {[}2{]}. To prove
(5), the latter authors showed that
\begin{equation}
\delta B_{n}-A_{n}=O\left(B_{n}e^{-cn/\left(\ln\left(n\right)\right)^{2}}\right)
\end{equation}
\[
\Longrightarrow\delta-\dfrac{A_{n}}{B_{n}}=O\left(e^{-cn/\left(\ln\left(n\right)\right)^{2}}\right)
\]
for some $c\in\mathbb{R}_{>0}$.

It is interesting to note that the scale factor $B_{n}\exp(-cn/\left(\ln\left(n\right)\right)^{2})$
appearing on the right-hand side of (6) is unbounded as $n\rightarrow\infty$
because
\[
B_{n}\geq\left(\dfrac{n}{e\ln\left(n\right)}\right)^{n}
\]
for all $n\in\mathbb{Z}_{\geq2}$.\footnote{See Grunwald and Serafin {[}3{]}.}
This implies
\[
B_{n}e^{-cn/\left(\ln\left(n\right)\right)^{2}}\geq\left(\dfrac{n}{e\ln\left(n\right)}\right)^{n}e^{-cn/\left(\ln\left(n\right)\right)^{2}}
\]
\[
\Longrightarrow\ln\left(B_{n}e^{-cn/\left(\ln\left(n\right)\right)^{2}}\right)\geq n\left[\ln\left(n\right)-1-\ln\left(\ln\left(n\right)\right)\right]-\dfrac{cn}{\left(\ln\left(n\right)\right)^{2}}
\]
\begin{equation}
=n\left[\ln\left(n\right)-1-\ln\left(\ln\left(n\right)\right)-\dfrac{c}{\left(\ln\left(n\right)\right)^{2}}\right],
\end{equation}
where the right-hand side of (7) diverges to positive infinity as
$n\rightarrow\infty$. However, $B_{n}\exp(-cn/\left(\ln\left(n\right)\right)^{2})\rightarrow\infty$
does not necessarily imply $\left|\delta B_{n}-A_{n}\right|\rightarrow\infty$,
although the latter divergence is supported by informal numerical
computation. Indeed, it is possible (albeit unlikely) that $\left|\delta B_{n_{k}}-A_{n_{k}}\right|\rightarrow0^{+}$
for some subsequence $\left\{ n_{k}\right\} _{k\geq0}$, a fact that
would imply the irrationality of $\delta$ provided $\left|\delta B_{n_{k}}-A_{n_{k}}\right|>0$
for all $k$.

In the present investigation, we show that $\delta B_{n}-A_{n}$ oscillates
without bound as $n\rightarrow\infty$; that is, both $\limsup_{n\rightarrow\infty}\left(\delta B_{n}-A_{n}\right)=\infty$
and $\liminf_{n\rightarrow\infty}\left(\delta B_{n}-A_{n}\right)=-\infty$.
These results follow from two basic lemmas presented in Section 2.
The main theorem is proved in Section 3.

\section{Two Lemmas}

\noindent This section provides two simple lemmas used in the proof
of our main result. The first lemma decomposes $g_{\delta}\left(x+2\pi i\right)$
into its real and imaginary parts and establishes a decay estimate
for $g_{\delta}\left(x\right)$ as $x\rightarrow\infty$. The second
lemma gives a useful asymptotic property of entire functions with
nonnegative real power-series coefficients.
\begin{lem}
\begin{singlespace}
\noindent Let $z=x+2\pi i\in\mathbb{C}$, where $x\in\mathbb{R}_{>0}$.
Then:
\[
\textrm{(i) }g_{\delta}\left(z\right)=g_{\delta}\left(x\right)-2\pi ie^{e^{x}};
\]
and
\[
\textrm{(ii) }g_{\delta}\left(x\right)=O\left(e^{-x}\right)\:as\:x\rightarrow\infty.
\]
\end{singlespace}
\end{lem}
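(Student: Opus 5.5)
For part (i), I will use the exponential generating function (4), $g_{\delta}(z)=e^{e^{z}-1}\bigl(\delta-\int_{0}^{z}e^{1-e^{t}}dt\bigr)$, together with the $2\pi i$-periodicity of $e^{z}$. Since $e^{x+2\pi i}=e^{x}$, the prefactor satisfies $e^{e^{z}-1}=e^{e^{x}-1}$. For the integral I will use path independence (property (b)) and take the path from $0$ to $x$ along the real axis, followed by the segment from $x$ to $x+2\pi i$. The second piece is not the one I want to isolate, so it is cleaner to route the path as $0\to 2\pi i\to 2\pi i+x$. On the horizontal leg $t=2\pi i+s$, periodicity gives $e^{1-e^{t}}=e^{1-e^{s}}$, so that leg contributes exactly $\int_{0}^{x}e^{1-e^{s}}ds$. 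On the vertical leg $t=iy$ with $y\in[0,2\pi]$, the contribution is $i\int_{0}^{2\pi}e^{1-e^{iy}}dy$. Expanding $e^{-e^{iy}}=\sum_{k}(-1)^{k}e^{iky}/k!$ and integrating term by term, only $k=0$ survives, so this leg equals $i\cdot 2\pi e=2\pi i e$. Hence
\[
\int_{0}^{x+2\pi i}e^{1-e^{t}}dt=\int_{0}^{x}e^{1-e^{s}}ds+2\pi i e ,
\]
and multiplying by $e^{e^{x}-1}$ gives $g_{\delta}(z)=g_{\delta}(x)-2\pi i e\cdot e^{e^{x}-1}=g_{\delta}(x)-2\pi i e^{e^{x}}$, which is (i).

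For part (ii), I will use the real representation. By property (c), $\delta-\int_{0}^{x}e^{1-e^{t}}dt=\int_{x}^{\infty}e^{1-e^{t}}dt$, so
\[
g_{\delta}(x)=\int_{x}^{\infty}e^{e^{x}-e^{t}}dt.
\]
Substituting $u=e^{t}$ turns this into $\int_{e^{x}}^{\infty}e^{e^{x}-u}\,u^{-1}du$. Since $u\geq e^{x}$ on this range, $u^{-1}\leq e^{-x}$, and therefore
\[
0<g_{\delta}(x)\leq e^{-x}\int_{e^{x}}^{\infty}e^{e^{x}-u}du=e^{-x}.
\]
This gives $g_{\delta}(x)=O(e^{-x})$. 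It also agrees with the stated closed form $-e^{e^{x}}\mathrm{Ei}(-e^{x})$.

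No step is a real obstacle. The only point needing care is the vertical-leg integral in (i): I must justify the term-by-term integration, which holds by uniform convergence of the exponential series on the compact segment, and I must get the orientation and sign right so that the $+2\pi i e$ in the integral becomes $-2\pi i e^{e^{x}}$ in $g_{\delta}$.
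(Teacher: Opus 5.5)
Your proof is correct, but for part (i) it takes a slightly different route from the paper; part (ii) matches the paper. In (ii) your substitution $u=e^{t}$ is the paper's $w=e^{t}-e^{x}$ up to a shift, followed by the same bound $1/u\le e^{-x}$.

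\textbf{Part (i) in the paper.} The paper works directly with the difference
$g_{\delta}(x+2\pi i)-g_{\delta}(x)=-e^{e^{x}}\int_{x}^{x+2\pi i}e^{-e^{t}}dt$,
which amounts to the path $0\to x\to x+2\pi i$. It then substitutes $u=e^{t}$, turning the vertical segment into the circle $|u|=e^{x}$. Finally it applies the residue theorem to $e^{-u}/u$.

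\textbf{Part (i) in your proposal.} You route the path $0\to 2\pi i\to 2\pi i+x$. The $2\pi i$-periodicity of the integrand identifies the horizontal leg with the real integral $\int_{0}^{x}$. You then evaluate the vertical leg at $\mathrm{Re}\,t=0$ by integrating the exponential series term by term, which keeps only the constant Fourier coefficient.

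\textbf{Comparison.} Your version avoids the residue theorem, needing only uniform convergence of the series. It also makes explicit that the constant $2\pi i e$ picked up by the integral does not depend on $x$. The paper's version is more direct and ties the $-2\pi i e^{e^{x}}$ term immediately to the residue of $e^{-u}/u$ at $u=0$. Both arguments compute the same quantity $e\oint e^{-u}u^{-1}\,du$, over the circles of radius $1$ and $e^{x}$ respectively. These agree by Cauchy's theorem, since the integrand is holomorphic on the annulus between them.
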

\begin{proof}
\begin{singlespace}
\phantom{}
\end{singlespace}

\begin{singlespace}
\medskip{}
\end{singlespace}

\noindent (i) Let $z=x+2\pi i\in\mathbb{C}$, and note that (4) implies
\[
g_{\delta}\left(x+2\pi i\right)-g_{\delta}\left(x\right)=e^{e^{x+2\pi i}-1}\left(\delta-\int_{0}^{x+2\pi i}e^{1-e^{t}}dt\right)-e^{e^{x}-1}\left(\delta-\int_{0}^{x}e^{1-e^{t}}dt\right)
\]
\begin{equation}
=-e^{e^{x}-1}\int_{x}^{x+2\pi i}e^{1-e^{t}}dt=-e^{e^{x}}\int_{x}^{x+2\pi i}e^{-e^{t}}dt.
\end{equation}
Substituting $u=e^{t}$ into the last integral of (8), where $u$
travels once counterclockwise around the circle $\left|u\right|=e^{x}$
as $t$ moves from $x$ to $x+2\pi i$, we find that
\[
g_{\delta}\left(x+2\pi i\right)-g_{\delta}\left(x\right)=-e^{e^{x}}{\displaystyle \oint}_{\left|u\right|=e^{x}}\dfrac{e^{-u}}{u}du.
\]
Then, since the integrand of the above contour integral has a simple
pole at $0$ with residue $1$, it follows that
\[
g_{\delta}\left(x+2\pi i\right)-g_{\delta}\left(x\right)=-2\pi ie^{e^{x}}
\]
\[
\Longrightarrow g_{\delta}\left(x+2\pi i\right)=g_{\delta}\left(x\right)-2\pi ie^{e^{x}}.
\]

\noindent (ii) From (4), we know that
\[
g_{\delta}\left(x\right)=e^{e^{x}-1}\left(\delta-\int_{0}^{x}e^{1-e^{t}}dt\right)=e^{e^{x}-1}\int_{x}^{\infty}e^{1-e^{t}}dt>0
\]
for $x\in\mathbb{R}_{>0}$. Making the substitution $w=e^{t}-e^{x}$
in the second integral, we then see that
\[
g_{\delta}\left(x\right)=e^{e^{x}-1}\int_{0}^{\infty}\dfrac{e^{1-w-e^{x}}}{w+e^{x}}dw=\int_{0}^{\infty}\dfrac{e^{-w}}{w+e^{x}}dw
\]
\[
\leq e^{-x}\int_{0}^{\infty}e^{-w}dw=e^{-x},
\]
yielding the bounds
\[
0<g_{\delta}\left(x\right)\leq e^{-x}.
\]
\end{proof}
\begin{lem}
\begin{singlespace}
\noindent For $z=x+iy\in\mathbb{C}$, let
\[
G\left(z\right)=\sum_{n=0}^{\infty}a_{n}\dfrac{z^{n}}{n!}
\]
be an entire function with $a_{n}\in\mathbb{R}_{\geq0}$ for all $n\in\mathbb{Z}_{\geq0}$.
If 
\[
G\left(x\right)=O\left(e^{x}\right)
\]
as $x\to\infty$, then
\[
\left|G\left(z\right)\right|=O\left(e^{x}\right)
\]
as $x\to\infty$ for any fixed $y\in\mathbb{R}$.
\end{singlespace}
\end{lem}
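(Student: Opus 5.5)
The plan is to use the triangle inequality on the power series together with the nonnegativity of the coefficients. For $z=x+iy$ we have $|z|=\sqrt{x^{2}+y^{2}}$. Since every $a_{n}\geq0$,
\[
\left|G\left(z\right)\right|\leq\sum_{n=0}^{\infty}a_{n}\dfrac{\left|z\right|^{n}}{n!}=G\left(\left|z\right|\right).
\]
This reduces the claim to a bound for $G$ on the positive real axis, evaluated at the slightly larger point $|z|$ instead of at $x$.

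Next I would compare $|z|$ with $x$ for fixed $y$. As $x\to\infty$,
\[
\left|z\right|-x=\dfrac{y^{2}}{\sqrt{x^{2}+y^{2}}+x}\leq\dfrac{y^{2}}{2x}\to0.
\]
In particular $x\leq|z|\leq x+1$ once $x\geq y^{2}/2$. Since $|z|\to\infty$ as $x\to\infty$, the hypothesis $G(t)=O(e^{t})$ applies at $t=|z|$. It gives constants $C,T>0$ with $G(t)\leq Ce^{t}$ for $t\geq T$, where we may drop absolute values because $G(t)\geq0$ for $t\geq0$. Hence for $x\geq\max\{T,y^{2}/2\}$,
\[
\left|G\left(z\right)\right|\leq G\left(\left|z\right|\right)\leq Ce^{\left|z\right|}\leq Ce^{x+y^{2}/(2x)}\leq Ce\,e^{x}.
\]
This is the desired $O(e^{x})$, with an implied constant that depends on $y$, which is allowed since $y$ is fixed.

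There is no real obstacle. The only point requiring care is that the hypothesis is applied at $|z|$ rather than at $x$, so one must check that the shift $|z|-x$ stays bounded; for fixed $y$ it in fact tends to zero. The $O(e^{x})$ growth scale is exactly what makes a bounded additive shift in the argument cost only a constant factor.
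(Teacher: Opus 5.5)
Your proof is correct and follows the paper's argument. Both bound $|G(z)|\le G(|z|)$ using the nonnegative coefficients, then use $e^{|z|}=e^{x}e^{O(y^{2}/x)}$ for fixed $y$. You just make the constants explicit, via $|z|-x\le y^{2}/(2x)$, which gives $|G(z)|\le Ce\,e^{x}$ for $x\ge\max\{T,y^{2}/2\}$.
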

\begin{proof}
\begin{singlespace}
\phantom{}
\end{singlespace}

\medskip{}
\noindent Given that $G\left(\cdot\right)$ is entire, the series ${\textstyle \sum_{n=0}^{\infty}}a_{n}\left|z\right|^{n}/n!$
must converge, and it follows from the triangle inequality that
\[
\left|G\left(z\right)\right|\le\sum_{n=0}^{\infty}a_{n}\frac{\left|z\right|^{n}}{n!}=G\left(\left|z\right|\right).
\]
Now assume that
\[
G\left(x\right)=O\left(e^{x}\right)
\]
as $x\to\infty$. We then see that
\[
\left|G\left(z\right)\right|\le G\left(\left|z\right|\right)=G\left(\sqrt{x^{2}+y^{2}}\right)=O\left(e^{\sqrt{x^{2}+y^{2}}}\right),
\]
where
\[
e^{\sqrt{x^{2}+y^{2}}}=e^{x}e^{O\left(y^{2}/x\right)},
\]
implying
\[
\left|G\left(z\right)\right|=O\left(e^{x}\right)
\]
for fixed $y$ as $x\to\infty$.
\end{proof}

\section{Main Result}

\noindent The following theorem gives our central result.
\begin{thm}
Let $\left\{ B_{n}\right\} _{n\geq0}$ and $\left\{ A_{n}\right\} _{n\geq0}$
denote the sequences of Bell and Gould numbers, respectively. Then
\[
\textrm{(i) }\underset{n\rightarrow\infty}{\limsup}\:\left(\delta B_{n}-A_{n}\right)=\infty;
\]
and
\[
\textrm{(ii) }\underset{n\rightarrow\infty}{\liminf}\:\left(\delta B_{n}-A_{n}\right)=-\infty.
\]
\newpage{}
\end{thm}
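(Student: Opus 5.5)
The plan is to argue by contradiction, combining Lemma 1 and Lemma 2. Write $e_{n}=\delta B_{n}-A_{n}$, so that $g_{\delta}(z)=\sum_{n\ge0}e_{n}z^{n}/n!$ by (4).

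For part (i), suppose instead that $\limsup_{n\to\infty}e_{n}<\infty$. Then there are $M\in\mathbb{R}_{>0}$ and $N\in\mathbb{Z}_{\geq0}$ with $e_{n}\le M$ for all $n\ge N$. Define
\[
G(z)=\sum_{n=N}^{\infty}\left(M-e_{n}\right)\frac{z^{n}}{n!}=Me^{z}-g_{\delta}(z)-P(z),
\qquad P(z)=\sum_{n=0}^{N-1}\left(M-e_{n}\right)\frac{z^{n}}{n!}.
\]
Here $P$ is a polynomial. The function $G$ is entire and has nonnegative coefficients, so Lemma 2 applies to it.

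On the real axis, Lemma 1(ii) gives $0<g_{\delta}(x)\le e^{-x}$, and $P(x)$ grows only polynomially. Hence $G(x)=O(e^{x})$ as $x\to\infty$. Lemma 2 with the fixed value $y=2\pi$ then yields $|G(x+2\pi i)|=O(e^{x})$.

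On the other hand, $e^{x+2\pi i}=e^{x}$, and Lemma 1(i) gives $g_{\delta}(x+2\pi i)=g_{\delta}(x)-2\pi ie^{e^{x}}$. Therefore
\[
G(x+2\pi i)=Me^{x}-g_{\delta}(x)+2\pi ie^{e^{x}}-P(x+2\pi i).
\]
Taking imaginary parts, $|G(x+2\pi i)|\ge 2\pi e^{e^{x}}-|P(x+2\pi i)|$. This grows doubly exponentially, which contradicts the $O(e^{x})$ bound. So $\limsup_{n\to\infty}e_{n}=\infty$.

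Part (ii) is symmetric. Suppose $e_{n}\ge -M$ for $n\ge N$. Take $G(z)=Me^{z}+g_{\delta}(z)-\tilde P(z)$, where $\tilde P(z)=\sum_{n=0}^{N-1}(M+e_{n})z^{n}/n!$, so that $G=\sum_{n\ge N}(M+e_{n})z^{n}/n!$ has nonnegative coefficients. The same two steps apply: $G(x)=O(e^{x})$ by Lemma 1(ii), so $|G(x+2\pi i)|=O(e^{x})$ by Lemma 2. But by Lemma 1(i) the imaginary part of $G(x+2\pi i)$ contains $-2\pi e^{e^{x}}$, again a contradiction.

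The only real subtlety is arranging nonnegative coefficients, since Lemma 2 requires them. Subtracting the finite polynomial $P$ (or $\tilde P$) handles the finitely many indices $n<N$ where the bound may fail. That polynomial is harmless on both the real line and the line $\operatorname{Im}z=2\pi$. Everything else is a direct application of the two lemmas.
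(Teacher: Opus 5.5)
Your proposal is correct and matches the paper's proof. Your $G=Me^{z}-g_{\delta}(z)-P(z)$ is the paper's $H^{+}=Ve^{z}-g_{\delta}(z)+Q^{+}(z)$ with $Q^{+}=-P$ (and the same holds in part (ii)), and the contradiction comes from the same comparison: Lemma 2's $O(e^{x})$ bound on the line $\operatorname{Im}z=2\pi$ against the $2\pi e^{e^{x}}$ imaginary part supplied by Lemma 1(i).
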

\begin{proof}
\begin{singlespace}
\phantom{}
\end{singlespace}

\medskip{}
\noindent (i) Assume, for the purpose of contradiction, that $\limsup_{n\rightarrow\infty}\left(\delta B_{n}-A_{n}\right)<\infty$;
that is, there exist constants $N\in\mathbb{Z}_{\geq1}$ and $U\in\mathbb{R}_{>0}$
such that $\delta B_{n}-A_{n}\le U$ for all $n\geq N$. Now select
$V\geq U$, and define both the polynomial
\[
Q^{+}\left(z\right)=-\sum_{n=0}^{N-1}\left[V-\left(\delta B_{n}-A_{n}\right)\right]\frac{z^{n}}{n!}
\]
(of degree at most $N-1$) and the function
\begin{equation}
H^{+}\left(z\right)=Ve^{z}-g_{\delta}\left(z\right)+Q^{+}\left(z\right)
\end{equation}
\begin{equation}
=\sum_{n=N}^{\infty}\left[V-\left(\delta B_{n}-A_{n}\right)\right]\frac{z^{n}}{n!},
\end{equation}
where, for $x\in\mathbb{R}_{>0}$, (9) and (10) yield
\[
H^{+}\left(x\right)=Ve^{x}-g_{\delta}\left(x\right)+Q^{+}\left(x\right)\geq0.
\]
Then, since $Q^{+}\left(x\right)$ is a finite-degree polynomial and
we know from part (ii) of Lemma 1 that $g_{\delta}\left(x\right)=O\left(e^{-x}\right)$,
it follows that
\[
0\leq H^{+}\left(x\right)=O\left(e^{x}\right)+O\left(e^{-x}\right)+o\left(e^{x}\right)=O\left(e^{x}\right)
\]
as $x\rightarrow\infty$.

Given that $H^{+}\left(\cdot\right)$ is entire with nonnegative coefficients
in the expansion of (10), we can fix $y=2\pi$ and conclude from Lemma
2 that
\begin{equation}
\left|H^{+}\left(x+2\pi i\right)\right|=O\left(e^{x}\right).
\end{equation}
On the other hand, part (i) of Lemma 1 allows us to write
\[
H^{+}\left(x+2\pi i\right)=Ve^{x+2\pi i}-g_{\delta}\left(x+2\pi i\right)+Q^{+}\left(x+2\pi i\right)
\]
\[
=Ve^{x}-g_{\delta}\left(x\right)+2\pi ie^{e^{x}}+Q^{+}\left(x+2\pi i\right),
\]
where $Q^{+}\left(x+2\pi i\right)$ is a finite-degree polynomial
in $x$. Thus,
\[
\left|H^{+}\left(x+2\pi i\right)\right|=\sqrt{\left[Ve^{x}-g_{\delta}\left(x\right)+\textrm{Re}\left(Q^{+}\left(x+2\pi i\right)\right)\right]^{2}+\left[2\pi e^{e^{x}}+\textrm{Im}\left(Q^{+}\left(x+2\pi i\right)\right)\right]^{2}}
\]
\[
\geq\left|2\pi e^{e^{x}}+\textrm{Im}\left(Q^{+}\left(x+2\pi i\right)\right)\right|
\]
\begin{equation}
=2\pi e^{e^{x}}+O\left(x^{N-1}\right)\sim2\pi e^{e^{x}}.
\end{equation}
We then see that (12) contradicts (11), implying that the original
hypothesis (i.e., $\limsup_{n\rightarrow\infty}\left(\delta B_{n}-A_{n}\right)<\infty$)
must be false.\medskip{}

\noindent (ii) As in the proof of part (i), we begin with a contradiction
hypothesis. In this case, it is assumed that $\liminf_{n\rightarrow\infty}\left(\delta B_{n}-A_{n}\right)>-\infty$;
that is, there exist constants $N\in\mathbb{Z}_{\geq1}$ and $U\in\mathbb{R}_{>0}$
such that 
\[
\delta B_{n}-A_{n}\geq-U
\]
for all $n\geq N$. We then select $V\geq U$, and define both the
polynomial 
\[
Q^{-}\left(z\right)=-\sum_{n=0}^{N-1}\left[V+\left(\delta B_{n}-A_{n}\right)\right]\frac{z^{n}}{n!}
\]
(of degree at most $N-1$) and the function
\begin{equation}
H^{-}\left(z\right)=Ve^{z}+g_{\delta}\left(z\right)+Q^{-}\left(z\right)
\end{equation}
\begin{equation}
=\sum_{n=N}^{\infty}\left[V+\left(\delta B_{n}-A_{n}\right)\right]\frac{z^{n}}{n!},
\end{equation}
where, for $x\in\mathbb{R}_{>0}$, (13) and (14) yield
\[
H^{-}\left(x\right)=Ve^{x}+g_{\delta}\left(x\right)+Q^{-}\left(x\right)\geq0.
\]
Then, since $Q^{-}\left(x\right)$ is a finite-degree polynomial and
$g_{\delta}\left(x\right)=O\left(e^{-x}\right)$, it follows that
\[
0\leq H^{-}\left(x\right)=O\left(e^{x}\right)+O\left(e^{-x}\right)+o\left(e^{x}\right)=O\left(e^{x}\right)
\]
as $x\rightarrow\infty$.

Given that $H^{-}\left(\cdot\right)$ is entire with nonnegative coefficients
in the expansion of (14), we know from Lemma 2 (with $y=2\pi$) that
\begin{equation}
\left|H^{-}\left(x+2\pi i\right)\right|=O\left(e^{x}\right).
\end{equation}
However, we also know from part (i) of Lemma 1 that
\[
H^{-}\left(x+2\pi i\right)=Ve^{x+2\pi i}+g_{\delta}\left(x+2\pi i\right)+Q^{-}\left(x+2\pi i\right)
\]
\[
=Ve^{x}+g_{\delta}\left(x\right)-2\pi ie^{e^{x}}+Q^{-}\left(x+2\pi i\right),
\]
where $Q^{-}\left(x+2\pi i\right)$ is a finite-degree polynomial
in $x$, implying
\[
\left|H^{-}\left(x+2\pi i\right)\right|=\sqrt{\left[Ve^{x}+g_{\delta}\left(x\right)+\textrm{Re}\left(Q^{-}\left(x+2\pi i\right)\right)\right]^{2}+\left[-2\pi e^{e^{x}}+\textrm{Im}\left(Q^{-}\left(x+2\pi i\right)\right)\right]^{2}}
\]
\[
\geq\left|-2\pi e^{e^{x}}+\textrm{Im}\left(Q^{-}\left(x+2\pi i\right)\right)\right|
\]
\begin{equation}
=2\pi e^{e^{x}}+O\left(x^{N-1}\right)\sim2\pi e^{e^{x}}.
\end{equation}
Since (16) contradicts (15), we conclude that the original hypothesis
(i.e., $\liminf_{n\rightarrow\infty}\left(\delta B_{n}-A_{n}\right)>-\infty$)
is false.
\end{proof}

\section{Conclusion}

\noindent In the present study, we considered asymptotic properties
of the Diophantine errors $\delta B_{n}-A_{n}$, where $\delta$ denotes
the Euler-Gompertz constant and $\left\{ B_{n}\right\} _{n\geq0}$
and $\left\{ A_{n}\right\} _{n\geq0}$ are the sequences of Bell and
Gould numbers, respectively. Previously, Asakly et al. {[}2{]} showed
that $\delta B_{n}-A_{n}=O(B_{n}\exp(-cn/\left(\ln\left(n\right)\right)^{2}))$
for some $c\in\mathbb{R}_{>0}$. We have complemented this estimate
by proving that both $\limsup_{n\rightarrow\infty}\left(\delta B_{n}-A_{n}\right)=\infty$
and $\liminf_{n\rightarrow\infty}\left(\delta B_{n}-A_{n}\right)=-\infty$.

It is important to note that Theorem 1 does not imply $\left|\delta B_{n}-A_{n}\right|\rightarrow\infty$,
although it lends credibility to this assertion, which is supported
by informal numerical computation. Moreover, the theorem does not
preclude the (unlikely) possibility that $\left|\delta B_{n_{k}}-A_{n_{k}}\right|\rightarrow0^{+}$
for some subsequence $\left\{ n_{k}\right\} _{k\geq0}$ -- a fact
that would imply the irrationality of $\delta$ provided $\left|\delta B_{n_{k}}-A_{n_{k}}\right|>0$
for all $k$. We further note that our results do not estimate the
asymptotic size or phase of the oscillations in the Diophantine error.
These refinements will be addressed in subsequent research.

\medskip{}


\begin{thebibliography}{1}
\begin{singlespace}
\bibitem{key-1}Gould, H. W. and Quaintance, J., 2007, ``A Linear
Binomial Recurrence and the Bell Numbers and Polynomials'', \emph{Applicable
Analysis and Discrete Mathematics}, 1, 371-385.

\bibitem{key-2}Asakly, W., Blecher, A., Brennan, C., Knopfmacher,
A., Mansour, T., and Wagner, S., 2014, ``Set Partition Asymptotics
and a Conjecture of Gould and Quaintance'', \emph{Journal of Mathematical
Analysis and Applications}, 416, 672-682.

\bibitem{key-3}Grunwald, J. and Serafin, G., 2025, ``Explicit Bounds
for Bell Numbers and Their Ratios'', \emph{Journal of Mathematical
Analysis and Applications}, 549, 129527.
\end{singlespace}

\end{thebibliography}
\end{document}